\newcommand{\cat}{\mathsf{CAT}}
\newcommand{\C}{\mathsf{C}}
\newcommand{\A}{\mathsf{A}}
\newcommand{\T}{\mathsf{T}}
\DeclareMathOperator{\Id}{Id}
\newenvironment{enumerate*}%
  {\begin{enumerate}[(I)]%
    \setlength{\itemsep}{10pt}%
    \setlength{\parskip}{0pt}}%
  {\end{enumerate}}
\newtheorem{theorem}{Theorem}
\newtheorem{lemma}[theorem]{Lemma}
\theoremstyle{definition}
\begin{document}

\title[]{Cats in cubes}

\author{Noga Alon}
\address{Department of Mathematics, Princeton University, Princeton, NJ 08544, USA and Schools of Mathematics and Computer Science, Tel Aviv University, Tel Aviv 69978, Israel}
\email{nalon@math.princeton.edu}
\author{Noah Kravitz}
\address[]{Department of Mathematics, Princeton University, Princeton, NJ 08544, USA}
\email{nkravitz@princeton.edu}

\maketitle

\begin{abstract}
Answering a recent question of Patchell and Spiro, we show that when a $d$-dimensional cube of side length $n$ is filled with letters, the word $\mathsf{CAT}$ can appear contiguously at most $(3^{d-1}/2)n^d$ times (allowing diagonals); we also characterize when equality occurs and extend our results to words other than $\mathsf{CAT}$.
\end{abstract}

\section{Cats in grids and cubes}
\subsection{Cats in grids}
Suppose we label an $n \times n$ grid with the letters $\C$, $\A$, and $\T$.  We can look for triples of consecutive letters (appearing horizontally, vertically, and diagonally as in a wordsearch) that spell out the word $\cat$.  What is the maximum number of such $\cat$'s in the grid?  Patchell and Spiro \cite{sam} showed that the number of $\cat$'s is at most $2n^2$.  They also constructed an example with $(3/2)n^2-O(n)$ $\cat$'s and conjectured that this construction is asymptotically optimal.\footnote{The formulation in terms of $\cat$'s appears in Sam Spiro's personal list of open problems \cite{sam2}.}  In this paper, we will confirm their conjecture in a very precise sense.

It is more natural to replace the $n \times n$ grid with the discrete torus $(\mathbb{Z}/n\mathbb{Z})^2$.  Notice that this operation of ``gluing'' opposite sides of the $n \times n$ grid can add only $O(n)$ $\cat$'s.  Patchell and Spiro's construction is as follows, for $n$ a multiple of $4$: Label the point $(x,y)$ with the letter $\A$ if $x$ is even; label $(x,y)$ with $\C$ if $x \equiv 1 \pmod{4}$; and label $(x,y)$ with $\T$ if $x \equiv 3 \pmod{4}$.  In other words, the letters appear in vertical stripes, and the order of the stripes is $$\ldots \C\A\T\A\C\A\T\A\ldots.$$
See the left part of Figure~\ref{fig:examples}.  Let us count the number of $\cat$'s in this labeling: Each $\C$ is part of exactly $6$ $\cat$'s ($2$ horizontal $\cat$'s and $4$ diagonal $\cat$'s), and there are $n^2/4$ $\C$'s, so we get $(3/2)n^2$ $\cat$'s in total.

\begin{figure}[htb]
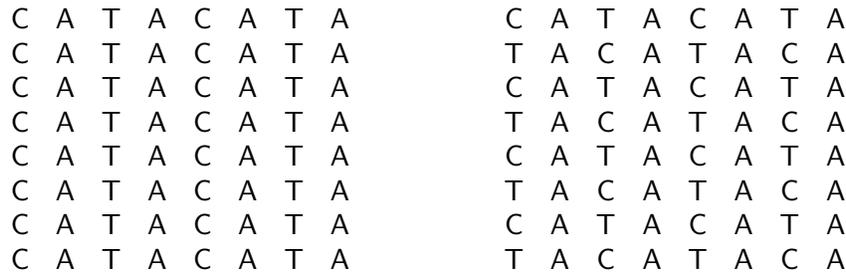

    \centering
    $\begin{matrix}
\C & \A & \T & \A & \C & \A & \T & \A \\
\C & \A & \T & \A & \C & \A & \T & \A \\
\C & \A & \T & \A & \C & \A & \T & \A \\
\C & \A & \T & \A & \C & \A & \T & \A \\
\C & \A & \T & \A & \C & \A & \T & \A \\
\C & \A & \T & \A & \C & \A & \T & \A \\
\C & \A & \T & \A & \C & \A & \T & \A \\
\C & \A & \T & \A & \C & \A & \T & \A
    \end{matrix}$ \hspace{1.8cm}
    $\begin{matrix}
\C & \A & \T & \A & \C & \A & \T & \A \\
\T & \A & \C & \A & \T & \A & \C & \A \\
\C & \A & \T & \A & \C & \A & \T & \A \\
\T & \A & \C & \A & \T & \A & \C & \A \\
\C & \A & \T & \A & \C & \A & \T & \A \\
\T & \A & \C & \A & \T & \A & \C & \A \\
\C & \A & \T & \A & \C & \A & \T & \A \\
\T & \A & \C & \A & \T & \A & \C & \A
    \end{matrix}$
    \caption{The original (left) and modified (right) $2$-dimensional Patchell--Spiro labelings for $n=8$, drawn without wrap-around.  Notice that each $\A$ is part of $3$ $\cat$'s and each $\C$ or $\T$ is part of $6$ $\cat$'s.}
    \label{fig:examples}
\end{figure}

We will also need the following modification of the Patchell--Spiro construction: Label $(x,y)$ with $\A$ if $x$ is even; label $(x,y)$ with $\C$ if $x$ is odd and $x+2y \equiv 1 \pmod{4}$; and label $(x,y)$ with $\T$ if $x$ is odd and $x+2y \equiv 3 \pmod{4}$.  In other words, the $\A$'s appear in vertical stripes as before, and the other vertical stripes alternately have the patterns
$$\ldots \T\C\T\C\T\C \cdots \quad \text{and} \quad \ldots \C\T\C\T\C\T \ldots .$$
See the right part of Figure~\ref{fig:examples}.  By the same computation as in the previous paragraph, there are $(3/2)n^2$ $\cat$'s.  Our main result (below) will imply that these labelings contain the maximum possible number of cats.

\subsection{Cats in cubes}
Spiro and Patchell also asked what happens in higher dimensions.  Now, we label $(\mathbb{Z}/n\mathbb{Z})^d$ ($n \geq 3$) with the letters $\C$, $\A$, and $\T$.  An occurrence of $\cat$ is a triple of points $x, x+y, x+2y$ with the labels $\C$, $\A$, $\T$ (in that order), where $x \in (\mathbb{Z}/n\mathbb{Z})^d$ and $y \in \{-1,0,1\}^d \setminus \{0\}$.
 
As expected, we have higher-dimensional analogues of the constructions from the previous subsection.  Suppose $n$ is a multiple of $4$.  Fix some $\epsilon_y \in \{1,3\}$ for each $y=(y_1,\ldots, y_{d-1}) \in \{0,1\}^{d-1}$, and  label the point $(x_1, \ldots, x_d) \in (\mathbb{Z}/n\mathbb{Z})^d$ as follows.  If $x_d$ is even, then label $(x_1, \ldots, x_d)$ with $\A$.  Now suppose $x_d$ is odd, and let $y\in \{0,1\}^{d-1}$ be the reduction  of $(x_1, \ldots, x_{d-1})$ modulo $2$; label $(x_1, \ldots, x_d)$ with $\C$ if $x_d \equiv \epsilon_y \pmod{4}$, and label it with $\T$ if $x_d \equiv \epsilon_y+2 \pmod{4}$.
In other words, we label each coset $(y,1)+(2\mathbb{Z}/n\mathbb{Z})^d$ with alternating hyperplanes (orthogonal to the $x_d$-direction) of $\C$'s and $\T$'s, and $\epsilon_y$ determines whether we ``start'' with $\C$ or with $\T$.  We can  obtain equivalent constructions by translating and by distinguishing a coordinate other than $x_d$, and we obtain $d\cdot 2^{2^{d-1}+1}$ examples in this way.  The same computation as in the previous subsection shows that these $d\cdot 2^{2^{d-1}+1}$ \emph{generalized Patchell--Spiro labelings} each have $(3^{d-1}/2)n^d$ $\cat$'s.  Our main result is that these labelings are optimal in all dimensions.

\begin{theorem}\label{thm:multi}
In any labeling of $(\mathbb{Z}/n\mathbb{Z})^d$ ($n \geq 3$) with the letters $\C$, $\A$, and $\T$, the number of $\cat$'s is at most $(3^{d-1}/2)n^d$.  Moreover, equality is attained if and only if $n$ is a multiple of $4$ and the labeling is a generalized Patchell--Spiro labeling.
\end{theorem}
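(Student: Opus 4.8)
The plan is to forget the distinction between $\C$ and $\T$ and reduce everything to an extremal problem about cuts in a Cayley graph. Let $\mathcal{A}\subseteq(\mathbb{Z}/n\mathbb{Z})^d$ be the set of points labeled $\A$, let $\mathcal{B}$ be its complement (the points labeled $\C$ or $\T$), and let $G$ be the king graph on $(\mathbb{Z}/n\mathbb{Z})^d$, i.e.\ the Cayley graph with connection set $\{-1,0,1\}^d\setminus\{0\}$, which is $(3^d-1)$-regular. Every $\cat$ is a triple $\{p-y,\,p,\,p+y\}$ with $p\in\mathcal{A}$ and $\{\mathrm{label}(p-y),\mathrm{label}(p+y)\}=\{\C,\T\}$; in particular both $p-y$ and $p+y$ lie in $\mathcal{B}$, so the number of $\cat$'s centered at a fixed $p\in\mathcal{A}$ is at most the number of antipodal pairs $\{p-y,p+y\}$ lying entirely in $\mathcal{B}$. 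Among the $(3^d-1)/2$ antipodal pairs through $p$, every pair that meets $\mathcal{A}$ contains one or two of the $d_{\mathcal{A}}(p)$ neighbors of $p$ in $\mathcal{A}$, so at least $d_{\mathcal{A}}(p)/2$ of the pairs meet $\mathcal{A}$, and hence at most $\tfrac12\bigl((3^d-1)-d_{\mathcal{A}}(p)\bigr)=\tfrac12 d_{\mathcal{B}}(p)$ of them lie in $\mathcal{B}$, where $d_{\mathcal{B}}(p)$ is the number of neighbors of $p$ in $\mathcal{B}$. Summing over $p\in\mathcal{A}$,
\[
N\;\le\;\tfrac12\sum_{p\in\mathcal{A}}d_{\mathcal{B}}(p)\;=\;\tfrac12\,e(\mathcal{A},\mathcal{B}),
\]
where $e(\mathcal{A},\mathcal{B})$ is the number of edges of $G$ across the bipartition. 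So the theorem follows from the \emph{Key Lemma}: for every bipartition of $V(G)$ one has $e(\mathcal{A},\mathcal{B})\le 3^{d-1}n^d$, with equality possible only when $n$ is even and $\mathcal{A}$ is a translate of $\{x:x_j\text{ is even}\}$ for some coordinate $j$.

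For the Key Lemma I would use the Hoffman-type spectral bound on max-cut. Put $f=\mathbf{1}_{\mathcal{A}}-\mathbf{1}_{\mathcal{B}}\in\{\pm1\}^{V}$; since $G$ is $(3^d-1)$-regular,
\[
4e(\mathcal{A},\mathcal{B})=\sum_{uv\in E(G)}(f_u-f_v)^2=(3^d-1)n^d-f^{\top}Mf\;\le\;n^d\bigl((3^d-1)-\lambda_{\min}\bigr),
\]
where $M$ is the adjacency matrix and $\lambda_{\min}$ its least eigenvalue. Being a Cayley graph of $(\mathbb{Z}/n\mathbb{Z})^d$, $G$ has eigenvalues $\prod_{j=1}^d\bigl(1+2\cos(2\pi a_j/n)\bigr)-1$ for $a\in(\mathbb{Z}/n\mathbb{Z})^d$; each factor lies in $[-1,3]$, and since a negative product has an odd number of negative factors, the product is at least $-3^{d-1}$, with equality only if one factor equals $-1$ and the rest equal $3$. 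Hence $\lambda_{\min}=-(3^{d-1}+1)$ when $n$ is even, and the displayed bound becomes $e(\mathcal{A},\mathcal{B})\le n^d\bigl(3^d-1+3^{d-1}+1\bigr)/4=3^{d-1}n^d$; when $n$ is odd the value $-1$ is not attained, so $\lambda_{\min}>-(3^{d-1}+1)$ and the bound is strictly smaller (also $\tfrac{3^{d-1}}{2}n^d$ is then not an integer). Equality forces $f$ to lie in the $\lambda_{\min}$-eigenspace, which is spanned by the $d$ mutually orthogonal real characters $x\mapsto(-1)^{x_j}$; a short computation shows that a $\{\pm1\}$-valued vector in this span must equal $\pm(-1)^{x_j}$ for a single $j$ (if two coefficients $c_1,c_2$ were nonzero, four suitably chosen points would receive values of the form $c\pm c_1\pm c_2$, which cannot all lie in $\{\pm1\}$), and this is exactly the asserted form of $\mathcal{A}$.

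For the equality case of the theorem, assume $N=\tfrac{3^{d-1}}{2}n^d$, so all inequalities above are tight. The Key Lemma gives that $n$ is even and, after permuting coordinates and translating, $\mathcal{A}=\{x:x_1\text{ even}\}$; and tightness of $N\le\tfrac12 e(\mathcal{A},\mathcal{B})$ forces $\{\mathrm{label}(p-y),\mathrm{label}(p+y)\}=\{\C,\T\}$ for every $p\in\mathcal{A}$ and every $y$ with $p\pm y\in\mathcal{B}$ (equivalently, every $y$ with $y_1=\pm1$). Encode the $\{\C,\T\}$-labeling of the odd hyperplane $\{x_1=m\}$ by $g_m\colon(\mathbb{Z}/n\mathbb{Z})^{d-1}\to\{\C,\T\}$. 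Taking $y=e_1$ in the condition shows $g_{m+2}$ is the pointwise complement of $g_m$ for every odd $m$; iterating around the $n/2$ odd residues modulo $n$ returns $g_m$ to itself after $n/2$ complementations, so $4\mid n$. Given $g_{m+2}=\overline{g_m}$, the remaining conditions (taking $y=e_1+\sigma$ with $\sigma\in\{-1,0,1\}^{d-1}$) reduce to $g_m(w-\sigma)=g_m(w+\sigma)$ for all $w$; with $\sigma=e_i$ this says $g_m$ is $2$-periodic in each coordinate, and then the case of general $\sigma$ is automatic. Hence $g_1$ is an arbitrary function of $(x_2,\dots,x_d)\bmod 2$ and everything else is determined — this is precisely a generalized Patchell--Spiro labeling (with $g_1$ playing the role of $(\epsilon_y)_{y\in\{0,1\}^{d-1}}$), and conversely all such labelings attain equality as already noted; the $d$ choices of distinguished coordinate, the parity shift, and the $2^{2^{d-1}}$ choices of $g_1$ give the stated count $d\cdot 2^{2^{d-1}+1}$.

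I expect the main obstacle to be the equality analysis rather than the bound itself: the max-cut inequality is a routine eigenvalue estimate once the reduction to cuts is in place, but showing that \emph{only} the axis-aligned alternating-hyperplane bipartitions are extremal — and then propagating that rigidity through the $\C/\T$ constraints to pin down the generalized Patchell--Spiro labelings and force $4\mid n$ — is where the care will be needed.
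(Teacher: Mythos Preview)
Your proof is correct and follows essentially the same route as the paper: both reduce to the max-cut bound $e(\mathcal{A},\mathcal{B})\le 3^{d-1}n^d$ via the quadratic form $f^\top Mf$ with $f=\mathbf 1_{\mathcal A}-\mathbf 1_{\mathcal B}$, compute the spectrum of the king Cayley graph to identify $\lambda_{\min}=-(3^{d-1}+1)$, and then characterize equality by showing a $\{\pm1\}$-valued vector in the minimal eigenspace must be a single parity character before propagating the $\C/\T$ constraints to recover the generalized Patchell--Spiro labelings and the condition $4\mid n$. The only cosmetic difference is the reduction step---you center on each $\A$ and bound antipodal $\mathcal B$-pairs by $d_{\mathcal B}(p)/2$, whereas the paper observes that each $\cat$ contributes one $\C\A$ and one $\A\T$ and hence $2\,\#(\cat)\le \#(\A\C)+\#(\A\T)=e(\mathcal A,\mathcal B)$---but these are two phrasings of the same inequality.
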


We remark that the analogous result holds more generally in $(\mathbb{Z}/n_1\mathbb{Z}) \times \cdots \times (\mathbb{Z}/n_d\mathbb{Z})$: 
The maximum number of $\cat$'s is at most $(3^{d-1}/2)n_1\cdots n_d$, and equality is attained exactly for the generalized Patchell--Spiro labelings (which exist when some $n_i$ is a multiple of $4$ and all $n_i$'s are at least $3$).  Since the proof of this asymmetric version is identical to the proof of the symmetric version, we stick with the latter for ease of exposition.

Our proof of Theorem~\ref{thm:multi} uses simple tools from spectral graph theory (or, equivalently, Fourier analysis).  We remark that the $2$-dimensional case of this theorem can also be proven in a completely elementary way but that this hands-on approach seems not to generalize to higher dimensions.

\subsection{Organization}
In Section~\ref{sec:higher}, we prove Theorem~\ref{thm:multi}.  In Section~\ref{sec:conclusion}, we briefly mention how our argument generalizes to words other than $\cat$.

\section{Proofs}\label{sec:higher}
We now prove Theorem~\ref{thm:multi}. Let $\omega=e^{2\pi i/n}$, and recall that the characters on the group $(\mathbb{Z}/n\mathbb{Z})^d$ are the functions
$$\chi_y(x):=\omega^{y \cdot x},$$
as $y$ ranges over the elements of $(\mathbb{Z}/n\mathbb{Z})^d$.  We will study the $n^d \times n^d$ matrix $M$ whose rows and columns are indexed by $(\mathbb{Z}/n\mathbb{Z})^d$; the $xy$-entry of $M$ is defined to equal $1$ if $x-y \in \{-1,0,1\}^d \setminus \{0\}$, and to equal $0$ otherwise.  Since $M$ is the adjacency matrix of a Cayley graph of $(\mathbb{Z}/n\mathbb{Z})^d$, the eigenvectors of $M$ are precisely the characters of $(\mathbb{Z}/n\mathbb{Z})^d$.  We can compute the eigenvalues explicitly: If $y$ has $b_k$ entries equal to $k$ for $k=0, \ldots, n-1$, then the eigenvalue $\lambda_y$ for $\chi_y$ is
\begin{align*}
\lambda_y=M\chi_y(0) &=\sum_{v \in \{-1,0,1\}^d \setminus \{0\}} \chi_y(v)\\
 &=-1+\prod_{k=0}^{n-1} \sum_{i=0}^{b_k} \sum_{j=0}^{b_k-i}\binom{b_k}{i}\binom{b_k-i}{j} \omega^{k(i-j)}\\
 &=-1+\prod_{k=0}^{n-1} \sum_{i=0}^{b_k} \binom{b_k}{i} \omega^{ki} \sum_{j=0}^{b_k-i}\binom{b_k-i}{j} \omega^{-kj}\\
 &=-1+\prod_{k=0}^{n-1} \sum_{i=0}^{b_k} \binom{b_k}{i} \omega^{ki} (1+\omega^{-k})^{b_k-i}\\
 &=-1+\prod_{k=0}^{n-1} (1+\omega^k+\omega^{-k})^{b_k}\\
 &=-1+\prod_{k=0}^{n-1}(1+2\cos(2\pi k/n))^{b_k}.
\end{align*}
In the second equality we split the coordinates of $y$ into level sets and then partitioned the $v$'s according to the number of $1$'s and $-1$'s in each such level set; the fourth and fifth equalities used the Binomial Theorem.  Notice that the largest eigenvalue is $\lambda_0=-1+3^d$.  All eigenvalues are greater than or equal to $-1-3^{d-1}$, and equality is achieved exactly when $y$ has one coordinate equal to $n/2$ and all other coordinates equal to $0$ (so necessarily $n$ is even for equality to hold).

We remark that this computation has a nice conceptual explanation.  Let $B$ be the adjacency matrix of the cycle graph on $n$ vertices.  Our matrix of interest can be expressed as
$$M=(B+\Id_n)^{\otimes d}-\Id_{n^d},$$
where $\Id_m$ denotes the $m \times m$ identity matrix and the superscript $\otimes d$ denotes the $d$-fold tensor product.  Hence the eigenvalues of $M$ are the quantities
$$(\mu_1+1) \cdots (\mu_d+1)-1,$$
as $\mu_1, \ldots, \mu_d$ range over eigenvalues of $B$ (and the eigenvectors of $M$ are tensor products of the eigenvectors of $B$).  The spectrum of $B$ is well known to be $\{2\cos(2\pi k/n): 0 \leq k \leq n-1\}$.  This behavior appears whenever one studies the adjacency spectrum of a graph product.

We are now ready to prove our main lemma, which can also be derived from known facts about max cuts in regular graphs with specified smallest eigenvalue (see, e.g., \cite{noga}, Lemma 3.1).  We define $\#(\A\C)$ to be the number of occurrences of $\A\C$ in our labeling, i.e., the number of pairs of points $x,x+y$ with the labels $\A$, $\C$ (in that order),  where $x \in (\mathbb{Z}/n\mathbb{Z})^d$ and $y \in \{-1,0,1\}^d \setminus \{0\}$.  We use analogous notation for counts of other configurations.

\begin{lemma}\label{lem:eigenval}
In any labeling of $(\mathbb{Z}/n\mathbb{Z})^d$ ($n \geq 3$), we have the inequality
$$\#(\A\C)+\#(\A\T)\leq 3^{d-1}n^d.$$
Moreover, equality is attained if and only if $n$ is even and the locus of the $\A$'s is of the form $\{(x_1, \ldots, x_d): x_r \equiv \epsilon \pmod{2}\}$ for some $1 \leq r \leq d$ and some $\epsilon \in \{0,1\}$.
\end{lemma}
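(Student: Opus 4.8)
The plan is to reinterpret $\#(\A\C)+\#(\A\T)$ as the size of a cut in the Cayley graph studied above and then invoke the standard spectral bound on max cuts. Write $A\subseteq(\mathbb{Z}/n\mathbb{Z})^d$ for the set of points carrying the label $\A$ and $\bar A$ for its complement. Because $\#(\A\C)$ counts the ordered pairs $(x,z)$ with $x\in A$, with $z$ labeled $\C$, and with $M_{xz}=1$, and likewise for $\#(\A\T)$, we get $\#(\A\C)+\#(\A\T)=\sum_{x\in A,\,z\in\bar A}M_{xz}$; since $n\ge 3$ makes the Cayley graph simple, this is exactly the number of edges of $M$ joining $A$ to $\bar A$. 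Introduce the $\pm1$-valued vector $v:=\mathbf{1}_A-\mathbf{1}_{\bar A}\in\mathbb{R}^{(\mathbb{Z}/n\mathbb{Z})^d}$.

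Now I would carry out the Rayleigh-quotient estimate. Expanding $v^\top Mv=\sum_{x,z}M_{xz}v_xv_z$ and separating the edges that stay inside $A$ (or inside $\bar A$) from those that cross gives $v^\top Mv=2|E|-4\bigl(\#(\A\C)+\#(\A\T)\bigr)$, where $|E|=\tfrac12(3^d-1)n^d$ is the number of edges of the $(3^d-1)$-regular graph $M$. Since $M$ is real symmetric, $v^\top Mv\ge\lambda_{\min}\|v\|^2=\lambda_{\min}n^d$, where $\lambda_{\min}$ is the least eigenvalue of $M$; by the eigenvalue computation above, $\lambda_{\min}\ge-1-3^{d-1}$. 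Combining these facts,
$$\#(\A\C)+\#(\A\T)=\frac{2|E|-v^\top Mv}{4}\le\frac{2|E|-\lambda_{\min}n^d}{4}\le\frac{(3^d-1)n^d+(1+3^{d-1})n^d}{4}=3^{d-1}n^d,$$
which is the desired inequality.

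For the equality case I would trace back when the two inequalities in this chain are tight. The rightmost is an equality precisely when $\lambda_{\min}=-1-3^{d-1}$, which by the computation above happens exactly when $n$ is even (for $n$ odd every eigenvalue is strictly larger, so the bound is strict). The other inequality is tight precisely when $v$ lies in the eigenspace of $M$ associated to its least eigenvalue; when $n$ is even, that eigenspace is spanned over $\mathbb{R}$ by the $d$ real characters $x\mapsto(-1)^{x_r}$ for $1\le r\le d$ (these are the $\chi_y$ with $y$ having a single coordinate equal to $n/2$). So it remains to identify the $\pm1$-valued vectors in $\operatorname{span}_{\mathbb{R}}\{(-1)^{x_1},\dots,(-1)^{x_d}\}$. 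If $v(x)=\sum_r c_r(-1)^{x_r}$ takes only the values $\pm1$, then comparing two points that differ only in the parity of the $r$-th coordinate gives $2c_r\in\{-2,0,2\}$, so $c_r\in\{-1,0,1\}$; and since these characters are pairwise orthogonal with squared norm $n^d$, we get $\sum_r c_r^2=\|v\|^2/n^d=1$. Hence exactly one $c_r$ equals $\pm1$ and the rest vanish, i.e.\ $v(x)=\pm(-1)^{x_r}$, which is the same as saying $A=\{x:x_r\equiv\epsilon\pmod 2\}$ for some $r$ and some $\epsilon\in\{0,1\}$. Conversely, for such an $A$ the vector $v$ equals $\pm\chi_y$ for the corresponding $y$, so both inequalities above are equalities and $\#(\A\C)+\#(\A\T)=3^{d-1}n^d$.

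The inequality itself is just the textbook spectral max-cut bound fed the eigenvalue computed before the lemma, so I expect the only genuinely delicate part to be the equality analysis — in particular the final step, showing that the only $\pm1$-valued vectors lying in the bottom eigenspace are $\pm(-1)^{x_r}$ — together with the routine but necessary check that $n$ odd forces the bound to be strict because $-1-3^{d-1}$ is then not an eigenvalue of $M$.
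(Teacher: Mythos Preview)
Your proof is correct and follows essentially the same route as the paper: define the $\pm 1$ indicator of the $\A$-locus, express the cut as $(2|E|-v^\top Mv)/4$, and bound $v^\top Mv$ below by $\lambda_{\min}\|v\|^2$ using the eigenvalue computation preceding the lemma. Your equality analysis is a mild variant of the paper's---where the paper observes that $\sum_i \epsilon_i a_{y_i}$ takes at least $k+1$ values and hence $k=1$, you instead pin down each coefficient via single-coordinate parity flips and then invoke the norm identity $\sum_r c_r^2=1$; both arguments reach the same conclusion.
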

\begin{proof}
Consider the function $f:(\mathbb{Z}/n\mathbb{Z})^d \to \{-1,1\}$ that equals $1$ at points labeled $\A$ and equals $-1$ at points labeled $\C$ and $\T$.  Then, with $M$ as defined above, we have
\begin{align*}
f^T M f &=2(\#(\A\A)+\#(\C\C)+\#(\T\T)+\#(\C\T)-\#(\A\C)-\#(\A\T))\\
 &=(3^d-1)n^d-4(\#(\A\C)+\#(\A\T)),
\end{align*}
so
\begin{equation}\label{eq:quadratic-form}
\#(\A\C)+\#(\A\T)=((3^d-1)n^d-f^T M f)/4.
\end{equation}
Expand $f$ as linear combination of characters, namely,
$$f=\sum_y a_y \chi_y.$$
Since $|f|=1$ everywhere, we have
$$n^d=\Vert f \Vert_2^2=\sum_y |a_y|^2 \cdot \Vert \chi_y \Vert_2^2=n^d \sum_y |a_y|^2$$
and hence
$$\sum_y |a_y|^2=1.$$
Now, again using the orthogonality of characters, we can bound
\begin{align*}
f^TMf &=\sum_y |a_y|^2 \cdot \Vert \chi_y \Vert_2^2 \cdot \lambda_y\\
 &\geq (-1-3^{d-1})n^d,
\end{align*}
with equality if and only if $n$ is even and the only nonzero $a_y$'s are the $y$'s with one entry equal to $n/2$ and all other entries equal to $0$.  We claim that since $f$ is $\{-1,1\}$-valued, this equality condition in fact implies that only a single such $a_y$ is nonzero.  Indeed, suppose the nonzero $a_y$'s correspond to $y_1, \ldots, y_k$, where each $y_i$ has $z_i$-entry equal to $n/2$ and all other entries equal to $0$.  Then, by considering the vectors $x$ with $z_i$-entries in $\{0,1\}$ (for $1 \leq i \leq k$) and all other entries equal to $0$, we see that $f(x)$ assumes all of the values
$$\sum_{i=1}^k \epsilon_i a_{y_i}$$
for $\epsilon_i \in \{-1,1\}$.
In particular, there are at least $k+1$ such values, and so $k$ must equal $1$, as claimed.  Notice that these possibilities correspond precisely to the configurations described in the ``moreover'' statement of the lemma.  

To complete the proof, we return to Equation~\ref{eq:quadratic-form} and find that
$$
\#(\A\C)+\#(\A\T) \leq ((3^d-1)n^d-(-1-3^{d-1})n^d)/4=3^{d-1}n^d,$$
as desired.
\end{proof}

Theorem~\ref{thm:multi} now follows easily.
\begin{proof}[Proof of Theorem~\ref{thm:multi}]
Since each occurrence of $\cat$ contains an occurrence of $\C\A$ and an occurrence of $\A\T$, we immediately deduce from Lemma~\ref{lem:eigenval} that
$$\#(\cat) \leq (3^{d-1}/2)n^d.$$
It remains only to characterize when equality holds.  Suppose $\#(\cat)=(3^{d-1}/2)n^d$.  Then, by Lemma~\ref{lem:eigenval}, $n$ is even and the locus of the $\A$'s is of the form $\{(x_1, \ldots, x_d): x_r \equiv \epsilon \pmod{2}\}$ for some $1 \leq r \leq d$ and some $\epsilon \in \{0,1\}$.  After translating and permuting the coordinates, we may assume that this locus is the set of points $(x_1, \ldots, x_d)$ with $x_d$ even.

We now analyze the $\C$'s and $\T$'s.  If the point $(x_1, \ldots, x_d)$ is labeled $\C$, then $(x_1, \ldots, x_d+2)$ must be labeled $\T$ and hence all of
$$(x_1 \pm 2, x_2, \ldots, x_d), (x_1, x_2 \pm 2, x_3, \ldots, x_d), \ldots, (x_1, \ldots, x_{d-2}, x_{d-1}\pm 2, x_d)$$
are labeled $\C$.  The same holds with the roles of $\C$ and $\T$ exchanged.  Iterating this observation and conditioning on the labels of the points $\{0,1\}^{d-1} \times \{1\}$, we arrive at the generalized Patchell--Spiro labelings.  (Notice that $n$ must be a multiple of $4$ in order for the $\C$'s and $\T$'s to alternate as we increase $x_d$ by increments of $2$.)  This completes the proof.
\end{proof}

\section{Lions, tigers, and other felines}\label{sec:conclusion}
Patchell and Spiro also asked what happens if $\cat$ is replaced by another word.  In particular, they asked about the case of words with all distinct letters, such as $\mathsf{LION}$ and $\mathsf{TIGER}$.  Our argument from the previous section gives the correct count for the maximum number of occurrences of such a word and completely confirms Conjecture 1 of~\cite{sam}.  (We thank Sam Spiro for bringing this to our attention.)

\begin{theorem}
Let $w$ be a word of length $r$ ($r \geq 2$) in which all letters are distinct.  In any labeling of $(\mathbb{Z}/n\mathbb{Z})^d$ ($n \geq r$), the number of occurrences of $w$ is at most $(3^{d-1}/(r-1))n^d$.  Moreover, equality is attained if and only $n$ is a multiple of $2r-2$ and the labeling is (the obvious $w$-analogue of) a generalized Patchell--Spiro labeling.
\end{theorem}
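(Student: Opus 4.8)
The plan is to reduce, exactly as in the proof of \cref{thm:multi}, to a cut-type inequality to which \cref{lem:eigenval} (or rather its proof) applies. Write $w=w_1w_2\cdots w_r$. Since the letters $w_1,\ldots,w_r$ are distinct, each occurrence of $w$ --- that is, each tuple $x,x+y,\ldots,x+(r-1)y$ labeled $w_1,\ldots,w_r$ --- contains exactly one occurrence of each consecutive bigram $w_iw_{i+1}$, namely the pair $x+(i-1)y,\ x+iy$; and for fixed $i$ this assignment is injective, since the bigram recovers both $y$ and $x$. Hence $\#(w)\le\#(w_iw_{i+1})$ for every $i$, so that
$$(r-1)\,\#(w)\le\sum_{i=1}^{r-1}\#(w_iw_{i+1}).$$
It therefore suffices to prove $\sum_{i=1}^{r-1}\#(w_iw_{i+1})\le 3^{d-1}n^d$.

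For this I would use that the path with vertices $w_1,\ldots,w_r$ and edges $w_iw_{i+1}$ is bipartite, with colour classes $\{w_1,w_3,\ldots\}$ and $\{w_2,w_4,\ldots\}$. Let $f\colon(\mathbb{Z}/n\mathbb{Z})^d\to\{-1,1\}$ take the value $1$ at points labeled by an odd-indexed letter of $w$ and the value $-1$ everywhere else. Using the symmetry $\#(w_iw_{i+1})=\#(w_{i+1}w_i)$ (replace $y$ by $-y$), one sees that $\sum_{i=1}^{r-1}\#(w_iw_{i+1})$ counts a family of edges of the Cayley graph underlying $M$, each of which crosses the cut into $\{f=1\}$ and $\{f=-1\}$, and that distinct values of $i$ contribute disjoint families (the unordered label pair determines $i$). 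Hence $\sum_i\#(w_iw_{i+1})$ is at most the number of edges crossing this cut, which equals $\bigl((3^d-1)n^d-f^{T}Mf\bigr)/4$ and so is at most $3^{d-1}n^d$ by the inequality $f^{T}Mf\ge(-1-3^{d-1})n^d$ established in the proof of \cref{lem:eigenval}. This already yields $\#(w)\le(3^{d-1}/(r-1))n^d$.

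For the equality case, observe that equality in all of the inequalities above forces: (a) every occurrence of every bigram $w_iw_{i+1}$ extends (uniquely) to an occurrence of $w$; and (b) the above cut is a maximum cut and every edge crossing it realizes some bigram $w_iw_{i+1}$. Arguing exactly as in the equality case of \cref{lem:eigenval}, (b) forces $n$ to be even and $\{f=1\}$ to be a parity class of one of the coordinates, which after translating and permuting coordinates we may take to be $\{x\colon x_d\text{ even}\}$; in particular every point is labeled by a letter of $w$, and any two adjacent points whose $x_d$-coordinates differ by $\pm1$ carry letters with consecutive indices. Applying (a) both to $w$ and --- via the reversal symmetry $\#(w)=\#(w^{\mathrm{rev}})$, which makes $w^{\mathrm{rev}}$ extremal as well --- to $w^{\mathrm{rev}}$, we find that along any line in the direction $e_d$ the sequence of letter-indices takes consecutive values and is monotone on each run between successive occurrences of the extreme indices $1$ and $r$; hence it reads off the ``ping-pong'' word $w_1w_2\cdots w_{r-1}w_rw_{r-1}\cdots w_2$, of period $2r-2$, and in particular $2r-2\mid n$. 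It remains to determine how the phase of this ping-pong may change between neighbouring $e_d$-lines; here one applies the crossing-edge constraint to steps whose $e_d$-component is $\pm1$ and whose other coordinates are arbitrary in $\{-1,0,1\}$, and finds that for $r\ge4$ the phase cannot change at all --- so the labeling consists of parallel ping-pong ``stripes'' --- whereas for $r\le3$ one recovers precisely the extra freedom present in the generalized Patchell--Spiro labelings. A count of $\#(w)$ in each such labeling, of the same kind as the one carried out for $\cat$, confirms that all of them attain equality.

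The inequality is routine once \cref{lem:eigenval} is in hand, since it only requires noticing that the letter-path is bipartite. The substantive part is the equality analysis, and there the main obstacle is using property (a) to upgrade the a priori arbitrary $\pm1$ walk of letter-indices along each $e_d$-line into the rigid ping-pong pattern, and then correctly bookkeeping which transverse phase shifts are permitted --- this is exactly where the qualitative distinction between $r\le3$ and $r\ge4$ appears.
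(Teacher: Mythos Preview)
Your argument for the inequality is correct and is exactly the paper's: split the alphabet by parity of index, observe that the $r-1$ consecutive bigrams of $w$ are distinct cut edges, and bound the size of the cut by the spectral computation in the proof of \cref{lem:eigenval}. The paper phrases this as $\#(w)\le\#(\mathcal{OE})/(r-1)$, which is the same inequality after noting $\sum_i\#(w_iw_{i+1})\le\#(\mathcal{OE})$.

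In the equality analysis there is one step that does not go through as you state it. For the transverse phase comparison you invoke only the crossing-edge constraint (your condition~(b)), but for $r=3$ this constraint is vacuous: any two indices in $\{1,2,3\}$ of opposite parity are automatically consecutive, so every crossing edge is a bigram regardless of how phases are assigned to different $e_d$-lines. Thus (b) alone permits an \emph{arbitrary} phase on each $e_d$-line, not merely the $(y\bmod 2)$-dependent choice of the generalized Patchell--Spiro labelings. To recover the Patchell--Spiro rigidity when $r=3$ you must also apply your condition~(a) to \emph{diagonal} bigrams: this is precisely the step in the proof of \cref{thm:multi} where a $\T$ at $(x_1,\ldots,x_d+2)$ forces a $\C$ at $(x_1\pm 2,x_2,\ldots,x_d)$ via an $\A\T$ in a diagonal direction. (Alternatively, just cite \cref{thm:multi} for $r=3$.) Your claim that for $r\ge 4$ condition~(b) alone already forbids any phase change between neighbouring lines is correct---at a trough $a_j=1$ one gets $b_{j\pm1}=2$, and then $b_j=3$ would make $j$ a local maximum of $b$ with value $3<r$, impossible---so the issue is confined to $r=3$.
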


Let us sketch the proof.  Let $\mathcal{O}$ denote the set of letters appearing at odd indices of $w$, and let $\mathcal{E}$ denote the set of letters appearing at even indices of $w$.  Define the function $f:(\mathbb{Z}/n\mathbb{Z})^d$ to equal $1$ at points labeled with elements of $\mathcal{O}$ and to equal $-1$ at points labeled with elements of $\mathcal{E}$.  The argument of Lemma~\ref{lem:eigenval} shows that
$$\#(\mathcal{OE}) \leq 3^{d-1}n^d$$
(where $\#(\mathcal{OE})$ counts occurrences of $\mathsf{OE}$ for $\mathsf{O} \in \mathcal{O}$ and $\mathsf{E} \in \mathcal{E}$), and we have the analogous characterization of when equality occurs.  Moreover, $$\#(w) \leq \#(\mathcal{OE})/(r-1)$$ (using the fact that each $\mathcal{OE}$ can appear in at most one $w$), and so
$$\#(w) \leq (3^{d-1}/(r-1))n^d.$$ The characterization of equality goes just as in the proof of Theorem~\ref{thm:multi}.  It is easy to see that the generalized Patchell--Spiro labelings have $(3^{d-1}/(r-1))n^d$ occurrences of $w$.

In fact, this argument also works for some words with repeated letters.  In particular, we can handle any word in which: (1) no letter appears at both even and odd indices; and (2) no pair of letters appears consecutively more than once (counting both forward and backward occurrences).  An example of such a word is $\mathsf{FELINE}$, and a non-example is $\mathsf{ELEPHANT}$ (because the pair $\mathsf{EL}$ appears twice).  Even for some examples violating (2), it is possible to obtain a tight bound along the lines described above; we leave the details to the interested reader.

\section*{Acknowledgments}
We thank Sam Spiro for helpful discussions. The first author is supported in part by
NSF grant DMS--2154082.  The second author is supported in part  by the NSF Graduate Research Fellowship Program under grant DGE--2039656.

\end{document}